\newtheorem{theorem}{Theorem}[section]
\newtheorem{lemma}[theorem]{Lemma}
\newtheorem{proposition}[theorem]{Proposition}
\newtheorem{corollary}[theorem]{Corollary}
\theoremstyle{definition}
\newtheorem{example}[theorem]{Example}
\newtheorem{remark}[theorem]{Remark}
\newcommand{\excise}[1]{}
\newcommand{\id}{\operatorname{id}}
\newcommand{\rk}{\operatorname{rk}}
\renewcommand{\and}{\qquad\text{and}\qquad}
\newcommand{\Ind}{\operatorname{Ind}}
\newcommand{\Res}{\operatorname{Res}}
\newcommand{\Z}{\mathbb{Z}}
\newcommand{\N}{\mathbb{N}}
\newcommand{\OS}{OS}
\newcommand{\VRep}{\operatorname{VRep}}
\newcommand{\IhL}{\mathscr{I}^W_{\nicefrac{1}{2}}(L)}
\newcommand{\Ih}{\mathscr{I}^W_{\nicefrac{1}{2}}(P)}
\newcommand{\cC}{\mathcal{C}}
\newcommand{\CWP}{\cC^W\!(P)}
\newcommand{\IWP}{I^W\!(P)}
\newcommand{\IWL}{I^W\!(L)}
\newcommand{\scrIWP}{\mathscr{I}^W\!(P)}
\newcommand{\scrIWL}{\mathscr{I}^W\!(L)}
\newcommand{\bigmid}{\;\Big{|}\;}
\begin{document}
\spacing{1.2}
\noindent{\Large\bf Equivariant incidence algebras and equivariant Kazhdan--Lusztig--Stanley theory}\\

\noindent{\bf Nicholas Proudfoot}\\
Department of Mathematics, University of Oregon,
Eugene, OR 97403\\
njp@uoregon.edu\\

{\small
\begin{quote}
\noindent {\em Abstract.} 
We establish a formalism for working with incidence algebras of posets with symmetries, and we develop
equivariant Kazhdan--Lusztig--Stanley theory within this formalism.  This gives a new way of thinking about
the equivariant Kazhdan--Lusztig polynomial and equivariant $Z$-polynomial of a matroid.
\end{quote} }

\section{Introduction}
The incidence algebra of a locally finite poset was first introduced by Rota, and has proved to be a natural
formalism for studying such notions as M\"obius inversion \cite{Rota-incidence}, generating functions \cite{incidence-generating}, 
and Kazhdan--Lusztig--Stanley polynomials \cite[Section 6]{Stanley-h}.

A special class of Kazhdan--Lusztig--Stanley polynomials that have received a lot of attention recently is that of Kazhdan--Lusztig polynomials
of matroids, where the relevant poset is the lattice of flats \cite{EPW,KLS}.  If a finite group $W$ acts on a matroid $M$ (and therefore on the lattice of flats),
one can define the $W$-equivariant Kazhdan--Lusztig polynomial of $M$ \cite{GPY}.
This is a polynomial whose coefficients are virtual representations of $W$, and has the property that taking dimensions recovers
the ordinary Kazhdan--Lusztig polynomial of $M$.  In the case of the uniform matroid of rank $d$ on $n$ elements, it is actually
much easier to describe the $S_n$-equivariant Kazhdan--Lusztig polynomial, which admits a nice description in terms of partitions of $n$,
than it is to describe the non-equivariant Kazhdan--Lusztig polynomial \cite[Theorem 3.1]{GPY}.

While the definition of Kazhdan--Lusztig--Stanley polynomials is greatly clarified by the language of incidence algebras,
the definition of the equivariant Kazhdan--Lusztig polynomial of a matroid is completely {\em ad hoc} and not nearly as elegant.
The purpose of this note is to define the equivariant incidence algebra of a poset with a finite group of symmetries, and to show
that the basic constructions of Kazhdan--Lusztig--Stanley theory make sense in this more general setting.
In the case of a matroid, we show that this approach recovers the same equivariant Kazhdan--Lusztig polynomials that were defined
in \cite{GPY}.\\

\noindent
{\em Acknowledgments:}
We thank Tom Braden for his feedback on a preliminary draft of this work.

\section{The equivariant incidence algebra}
Fix once and for all a field $k$.
Let $P$ be a locally finite poset equipped with the action of a finite group $W$.
We consider the category $\CWP$ whose objects consist of
\begin{itemize}
\item a $k$-vector space $V$
\item a direct product decomposition $V = \prod_{x\leq y\in P} V_{xy}$, with each $V_{xy}$ finite dimensional
\item an action of $W$ on $V$ compatible with the decomposition.
\end{itemize}
More concretely, for any $\sigma\in W$ and any $x\leq y\in P$, we have a linear map
$\varphi^{\sigma}_{xy}:V_{xy}\to V_{\sigma(x)\sigma(y)}$, 
and we require that $\varphi^{e}_{xy} = \id_{V_{xy}}$ and that
$\varphi^{\sigma'}_{\sigma(x)\sigma(y)}\circ\varphi^{\sigma}_{xy} = \varphi^{\sigma'\sigma}_{xy}$.
Morphisms in $\CWP$ are defined to be linear maps that are compatible with both the decomposition and the action.
This category admits a monoidal structure, with tensor product given by 
$$(U\otimes V)_{xz} := \bigoplus_{x\leq y\leq z}U_{xy}\otimes V_{yz}.$$
Let $\IWP$ be the Grothendieck ring of $\CWP$; we call $\IWP$ the {\bf equivariant incidence algebra} of $P$
with respect to the action of $W$.

\begin{example}
If $W$ is the trivial group, then $\IWP$ is isomorphic to the usual incidence algebra of $P$ with coefficients in $\Z$.
That is, it is isomorphic as an abelian group to a direct product of copies of $\Z$, one for each interval in $P$, and multiplication
is given by convolution.
\end{example}

\begin{remark}\label{base change}
If $W$ acts on $P$ and $\psi:W'\to W$ is a group homomorphism, then $\psi$ induces a functor $F_\psi:\CWP\to\cC^{W'}\!(P)$
and a ring homomorphism $R_\psi:\IWP\to I^{W'}\!(P)$.
\end{remark}

We now give a second, more down to earth description of $\IWP$.
Let $\VRep(W)$ denote the ring of finite dimensional virtual representations of $W$ over the field $k$.
A group homomorphism $\psi:W'\to W$ induces a ring homomorphism $\Lambda_\psi:\VRep(W)\to\VRep(W')$.
For any $x\in P$, let $W_x\subset W$ be the stabilizer of $x$.  We also define 
$W_{xy} := W_x\cap W_y$ and $W_{xyz} := W_x \cap W_y\cap W_z$.
Note that, for any $x,y\in P$ and $\sigma\in W$, conjugation by $\sigma$ gives a group isomorphism 
$$\psi_{xy}^\sigma:W_{xy}\to W_{\sigma(x)\sigma(y)},$$ which induces a ring isomorphism
$$\Lambda_{\psi_{xy}^\sigma}:\VRep(W_{\sigma(x)\sigma(y)})\to \VRep(W_{xy}).$$ 
An element $f\in \IWP$ is uniquely determined by a collection $$\{f_{xy}\mid x\leq y\in P\},$$ where $f_{xy}\in\VRep(W_{xy})$
and for any $\sigma\in W$ and $x\leq y\in P$, $f_{xy} = \Lambda_{\psi_{xy}^\sigma}\left(f_{\sigma(x)\sigma(y)}\right)$.
The unit $\delta\in\IWP$ is characterized by the property that $\delta_{xx}$ is the 1-dimensional trivial representation of $W_x$ for all $x\in P$
and $\delta_{xy} = 0$ for all $x<y\in P$.
The following proposition describes the product structure on $\IWP$ in this representation.

\begin{proposition}\label{multiplication}
For any $f,g\in\IWP$.
$$(fg)_{xz} := \sum_{x\leq y\leq z}\frac{|W_{xyz}|}{|W_{xz}|}\Ind_{W_{xyz}}^{W_{xz}} \left(\left(\Res^{W_{xy}}_{W_{xyz}} f_{xy}\right)\otimes\left( \Res^{W_{yz}}_{W_{xyz}} g_{yz}\right)\right).$$
\end{proposition}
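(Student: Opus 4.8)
The plan is to reduce the identity to the case of honest objects of $\CWP$ and then to compute the monoidal product one $W_{xz}$-orbit at a time. \emph{First}, since $\IWP$ is the Grothendieck ring of $\CWP$, it is additively generated by the classes $[U]$ of objects $U\in\CWP$, and under the concrete description above $[U]$ corresponds to the collection $\{[U_{xy}]\}$ with $[U_{xy}]\in\VRep(W_{xy})$ the class of the $W_{xy}$-representation $U_{xy}$. Both sides of the asserted formula are additive in each of $f$ and $g$ separately: the left side because $\otimes$ on $\CWP$ is additive in each variable (and $[U][V]=[U\otimes V]$), the right side because $\Res$, $\otimes$ with a fixed representation, $\Ind$, scalar multiplication and finite sums are all additive. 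Hence, writing $f=[U]$, $g=[V]$, it suffices to prove
\[
\big[(U\otimes V)_{xz}\big]=\sum_{x\leq y\leq z}\frac{|W_{xyz}|}{|W_{xz}|}\Ind_{W_{xyz}}^{W_{xz}}\!\left(\left(\Res^{W_{xy}}_{W_{xyz}}[U_{xy}]\right)\otimes\left(\Res^{W_{yz}}_{W_{xyz}}[V_{yz}]\right)\right)
\]
in $\VRep(W_{xz})$ for all $U,V\in\CWP$ and all $x\leq z$ in $P$.

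\emph{Next}, recall $(U\otimes V)_{xz}=\bigoplus_{x\leq y\leq z}U_{xy}\otimes V_{yz}$. The $W$-action on $U\otimes V$ is compatible with the interval decomposition, so the subgroup $W_{xz}=W_x\cap W_z$, which fixes $x$ and $z$ and hence permutes the finite set $Y:=\{y\in P\mid x\leq y\leq z\}$, acts on $(U\otimes V)_{xz}$, with $\sigma\in W_{xz}$ carrying the summand $U_{xy}\otimes V_{yz}$ isomorphically onto $U_{x\sigma(y)}\otimes V_{\sigma(y)z}$ via $\varphi^\sigma_{xy}\otimes\varphi^\sigma_{yz}$. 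Partition $Y$ into $W_{xz}$-orbits and choose a representative $y_0$ of each orbit $O$; the $W_{xz}$-stabilizer of $y_0$ is $W_x\cap W_z\cap W_{y_0}=W_{xy_0z}$. The subspace $\bigoplus_{y\in O}U_{xy}\otimes V_{yz}$ is $W_{xz}$-stable, and transporting along coset representatives $g_y\in W_{xz}$ with $g_y(y_0)=y$, using the cocycle relation $\varphi^{\sigma'}_{\sigma(x)\sigma(y)}\circ\varphi^{\sigma}_{xy}=\varphi^{\sigma'\sigma}_{xy}$ and $\varphi^e=\id$, identifies it $W_{xz}$-equivariantly with $\Ind_{W_{xy_0z}}^{W_{xz}}(U_{xy_0}\otimes V_{y_0z})$, where $U_{xy_0}\otimes V_{y_0z}$ is viewed as a $W_{xy_0z}$-representation by restriction from $W_{xy_0}$ and from $W_{y_0z}$. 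Summing over orbits and passing to classes gives
\[
\big[(U\otimes V)_{xz}\big]=\sum_{O}\Ind_{W_{xy_Oz}}^{W_{xz}}\!\left(\left(\Res^{W_{xy_O}}_{W_{xy_Oz}}[U_{xy_O}]\right)\otimes\left(\Res^{W_{y_Oz}}_{W_{xy_Oz}}[V_{y_Oz}]\right)\right),
\]
one term per $W_{xz}$-orbit $O$ in $Y$, with $y_O$ its chosen representative.

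\emph{Finally}, it remains to check that the right-hand side of the reduced identity, a sum over all $y\in Y$ weighted by $|W_{xyz}|/|W_{xz}|$, equals this sum over orbit representatives. Fix an orbit $O$ with representative $y_0$ and let $y=\sigma(y_0)$ for some $\sigma\in W_{xz}$. Then $W_{xyz}=\sigma W_{xy_0z}\sigma^{-1}$, so $|W_{xyz}|=|W_{xy_0z}|$; moreover conjugation by $\sigma$ is an inner automorphism of $W_{xz}$, and combining this with the equivariance relations for $[U]$ and $[V]$ (which express $[U_{xy_0}],[V_{y_0z}]$ as the conjugates of $[U_{xy}],[V_{yz}]$ by $\sigma$) shows that the $y$-summand equals the $y_0$-summand. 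Since $|O|=|W_{xz}|/|W_{xy_0z}|$, the orbit $O$ contributes $|O|\cdot\frac{|W_{xy_0z}|}{|W_{xz}|}\cdot(\text{common summand})=(\text{common summand})$, matching the corresponding term in the previous display; summing over $O$ completes the proof. I expect the only genuinely delicate point to be this last rewriting — verifying that the scalar $|W_{xyz}|/|W_{xz}|$ is precisely what converts a sum over $W_{xz}$-orbits in $Y$ into a sum over all of $Y$, and that equivariance makes the summands within a single orbit literally equal; the identification in the middle step of a permutation-type direct sum with an induced representation is the standard fact about $G$-equivariant objects over a transitive $G$-set and requires no real work.
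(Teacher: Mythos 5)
Your proof is correct and follows essentially the same route as the paper: reduce by linearity to classes of actual objects $U,V$ of $\CWP$, note that $(U\otimes V)_{xz}=\bigoplus_{x\leq y\leq z}U_{xy}\otimes V_{yz}$ is a $W_{xz}$-representation whose summands are permuted compatibly with the $W_{xz}$-action on $[x,z]$, and then identify the orbit pieces with induced representations. The only difference is that the paper packages your middle and final steps into a separately stated standard fact (Lemma~\ref{induced reps}) and cites it, whereas you prove that fact inline, including the bookkeeping that converts the orbit sum into the weighted sum over all $y$.
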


\begin{remark}\label{fractions}
It may be surprising to see the fraction $\frac{|W_{xyz}|}{|W_{xz}|}$ in the statement of Proposition \ref{multiplication},
since $\VRep(W_{xy})$ is not a vector space over the rational numbers.
We could in fact replace the sum over $[x,z]$ with a sum over one representative of each $W_{xz}$-orbit in $[x,z]$ and then eliminate the factor of  
$\frac{|W_{xyz}|}{|W_{xz}|}$.  Including the fraction in the equation allows us to avoid choosing such representatives.
\end{remark}

\begin{remark}
Proposition \ref{multiplication} could be taken as the definition of $\IWP$.  
It is not so easy to prove associativity directly from this definition,
though it can be done with the help of Mackey's restriction formula (see for example \cite[Corollary 32.2]{Bump}).
\end{remark}

\begin{remark}\label{dimension}
Suppose that $\psi:W'\to W$ is a group homomorphism, and for any $x,y\in P$, consider the induced group homomorphism $\psi_{xy}:W'_{xy}\to W_{xy}$.
For any $f\in \IWP$, we have, $R_\psi(f)_{xy} = \Lambda_{\psi_{xy}}\left(f_{xy}\right)$.
In particular, if $W'$ is the trivial group, then $R_\psi(f)_{xy}$ is equal to the dimension of the virtual representation $f_{xy}\in\VRep(W_{xy})$.
\end{remark}

Before proving Proposition \ref{multiplication}, we state the following standard lemma in representation theory.

\begin{lemma}\label{induced reps}
Suppose that $E = \bigoplus_{s\in S} E_s$ is a vector space that decomposes as a direct sum of pieces indexed by a finite set $S$.
Suppose that $G$ acts linearly on $E$ and acts by permutations on $S$ such that, for all $s\in S$ and $\gamma\in G$,
$\gamma\cdot E_s = E_{\gamma\cdot s}$.  For each $x\in S$, let $G_x\subset G$ denote the stabilizer of $s$.
Then there exists an isomorphism
$$E \cong \bigoplus_{s\in S}\frac{|G_s|}{|G|}\Ind_{G_s}^{G} \big(E_s\big)$$
of representations of $G$.\footnote{As in Remark \ref{fractions}, we may eliminate the fraction at the cost of choosing one
representative of each $W$-orbit in $S$.}
\end{lemma}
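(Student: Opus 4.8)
The plan is to decompose $S$ into $G$-orbits, reduce to the case of a single orbit, and then identify $\bigoplus_{s\in S} E_s$ with an induced representation via the standard explicit model of induction. First I would write $S = S_1 \sqcup \cdots \sqcup S_k$ as a disjoint union of $G$-orbits. Because $\gamma\cdot E_s = E_{\gamma\cdot s}$ for all $\gamma$ and $s$, each $E^{(i)} := \bigoplus_{s\in S_i} E_s$ is a $G$-subrepresentation of $E$, and $E = \bigoplus_i E^{(i)}$ as representations of $G$. On the other side of the claimed isomorphism, the coefficient $\frac{|G_s|}{|G|}$ is constant along each orbit $S_i$, where by the orbit--stabilizer theorem it equals $\frac{1}{|S_i|}$; so that side also decomposes as $\bigoplus_i \big(\frac{1}{|S_i|}\bigoplus_{s\in S_i}\Ind_{G_s}^G(E_s)\big)$. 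Hence it is enough to prove the statement when $S$ is a single $G$-orbit.

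Assume now that $S$ is transitive, fix a basepoint $s_0\in S$, and set $H := G_{s_0}$. Choose coset representatives $\gamma_1 = e, \gamma_2,\dots,\gamma_m$ for $G/H$, so that $m = |S|$ and $S = \{\gamma_1 s_0,\dots,\gamma_m s_0\}$ with the $\gamma_j s_0$ pairwise distinct. Using the standard model $\Ind_H^G(E_{s_0}) = k[G]\otimes_{k[H]} E_{s_0}$, I would define $\Phi\colon \Ind_H^G(E_{s_0}) \to E$ on simple tensors by $\Phi(g\otimes v) = g\cdot v$, where $g\cdot v$ is computed using the given $G$-action on $E$ and lands in $E_{g\cdot s_0}$. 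This is well defined because it is balanced over $H$ (for $h\in H$ one has $gh\cdot v = g\cdot(h\cdot v)$), it is $G$-equivariant directly from the definition of the $G$-action on an induced representation, and it is a linear isomorphism because it carries the summand $\gamma_j\otimes E_{s_0}$ isomorphically onto $E_{\gamma_j s_0}$ and these exhaust the summands of $E = \bigoplus_{s\in S} E_s$ exactly once. Thus $\Ind_H^G(E_{s_0}) \cong E$.

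It remains to match this with the literal fractional expression $\frac{1}{|S|}\bigoplus_{s\in S}\Ind_{G_s}^G(E_s)$. For $s = \gamma\cdot s_0$ the stabilizer is $G_s = \gamma H \gamma^{-1}$ and $E_s = \gamma\cdot E_{s_0}$, with the intertwiner $v\mapsto \gamma\cdot v$ identifying the $H$-representation $E_{s_0}$ with the $G_s$-representation $E_s$ up to conjugation by $\gamma$; since induction is insensitive to such conjugation, $\Ind_{G_s}^G(E_s)\cong \Ind_H^G(E_{s_0})$ for every $s\in S$. Therefore $\bigoplus_{s\in S}\Ind_{G_s}^G(E_s)\cong |S|\cdot\Ind_H^G(E_{s_0})$, and multiplying by $\frac{1}{|S|} = \frac{|G_s|}{|G|}$ recovers $\Ind_H^G(E_{s_0})\cong E$. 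Alternatively, one may simply adopt the convention of the footnote and read the right-hand side as $\bigoplus$ over a fixed set of orbit representatives, in which case the content of this paragraph is absorbed into the reduction of the first paragraph.

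The only subtlety I anticipate is purely bookkeeping: the coefficients $\frac{|G_s|}{|G|}$ are not integers, so an individual summand $\frac{|G_s|}{|G|}\Ind_{G_s}^G(E_s)$ does not literally live in $\VRep(G)$, but such summands are always collected over a complete orbit of size $[G:G_s]$, where they assemble into an honest representation. No characteristic hypothesis on $k$ is needed; the argument above is the standard identification of a ``monomial'' $G$-set decomposition with an induced module, together with orbit--stabilizer counting. (In characteristic zero one could instead verify the isomorphism at the level of characters, but the direct construction of $\Phi$ is cleaner and more general.)
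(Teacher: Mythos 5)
The paper does not actually prove this lemma: it is introduced with the remark ``we state the following standard lemma in representation theory'' and then used directly in the proof of Proposition~\ref{multiplication} without further argument. So there is no in-paper proof to compare against; your job was to supply one, and you have done so correctly.

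Your argument is the standard one and it is sound. The orbit decomposition of $S$ correctly splits $E$ into $G$-subrepresentations, and your observation that $\frac{|G_s|}{|G|} = \frac{1}{|S_i|}$ is constant on each orbit $S_i$ is exactly what makes the fractional notation on the right-hand side resolve into honest representations orbit by orbit. For a single orbit, the map $\Phi(g\otimes v) = g\cdot v$ from $k[G]\otimes_{k[H]} E_{s_0}$ to $E$ is well defined (balanced over $H$), $G$-equivariant, and a linear bijection because each coset summand $\gamma_j\otimes E_{s_0}$ maps isomorphically onto $E_{\gamma_j s_0}$ and these exhaust the summands of $E$. The final reconciliation step, using $\Ind_{\gamma H\gamma^{-1}}^G(\gamma\cdot E_{s_0})\cong \Ind_H^G(E_{s_0})$ to convert $\frac{1}{|S|}\bigoplus_{s\in S}\Ind_{G_s}^G(E_s)$ into a single induced representation, is also correct, though as you note it is cleanest to simply invoke the convention of the footnote and sum over orbit representatives. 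Your remark that no hypothesis on the characteristic of $k$ is needed is right, and the explicit $\Phi$ is indeed preferable to a character-theoretic argument for that reason.
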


\begin{proof}[Proof of Proposition \ref{multiplication}.]
By linearity, it is sufficient to prove the proposition in the case where we have objects $U$ and $V$ of $\CWP$
with $f = [U]$ and $g = [V]$.
This means that, for all $x\leq y\leq z\in P$, $f_{xy} = [U_{xy}]\in\VRep(W_{xy})$,
$g_{yz} = [V_{yz}]\in\VRep(W_{yz})$,
and $$(fg)_{xz} = \big[(U\otimes V)_{xz}\big] = \left[\bigoplus_{x\leq y\leq z}U_{xy}\otimes V_{yz}\right]\in \VRep(W_{xz}).$$
The proposition then follows from Lemma \ref{induced reps} by taking $E = (U\otimes V)_{xz}$, $S = [x,z]$, and $G = W_{xz}$.
\end{proof}

Let $R$ be a commutative ring.  Given an element $f\in\IWP\otimes R$ and a pair of elements $x\leq y\in P$, we will write $f_{xy}$ to denote the
corresponding element of $\VRep(W_{xy})\otimes R$.

\begin{proposition}\label{inverses}
An element $f\in\IWP\otimes R$ is (left or right) invertible if and only if $f_{xx}\in\VRep(W_{x})\otimes R$ is invertible for all $x\in P$.
In this case, the left and right inverses are unique and they coincide.
\end{proposition}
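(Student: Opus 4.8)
The plan is to follow the classical argument for the invertibility criterion in an incidence algebra, the only new ingredient being the bookkeeping of stabilizer groups. Two preliminary observations drive everything. First, $\VRep(W)\otimes R$ is a \emph{commutative} ring for every finite group $W$ (tensor product of representations is symmetric and $R$ is commutative), so in each such ring the notions of left, right, and two-sided invertibility coincide. Second, the multiplication formula of Proposition~\ref{multiplication} degenerates on the diagonal: the only term with $x\leq y\leq x$ is $y=x$, and since $W_{xxx}=W_{xx}=W_x$ the coefficient is $1$ and the induction and restriction are trivial, giving $(fg)_{xx}=f_{xx}\,g_{xx}$ in $\VRep(W_x)\otimes R$, while $\delta_{xx}=1$. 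Granting these, the ``only if'' direction is immediate: if $gf=\delta$ or $fg=\delta$ then $g_{xx}f_{xx}=1$ or $f_{xx}g_{xx}=1$, and commutativity upgrades either to two-sided invertibility of $f_{xx}$.

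For the ``if'' direction I would build a right inverse $g$ of $f$ by recursion on the cardinality of the interval $[x,z]$, which is finite since $P$ is locally finite. Put $g_{xx}:=f_{xx}^{-1}$. For $x<z$, isolate in the equation $(fg)_{xz}=0$ the term indexed by $y=x$; because $W_{xxz}=W_x\cap W_z=W_{xz}$ this term is exactly $\bigl(\Res^{W_x}_{W_{xz}}f_{xx}\bigr)\cdot g_{xz}$, and $\Res^{W_x}_{W_{xz}}f_{xx}$ is invertible in $\VRep(W_{xz})\otimes R$ since restriction is a ring homomorphism and ring homomorphisms send invertible elements to invertible elements. One is thereby forced to set
$$g_{xz}:=-\bigl(\Res^{W_x}_{W_{xz}}f_{xx}\bigr)^{-1}\sum_{x<y\leq z}\frac{|W_{xyz}|}{|W_{xz}|}\Ind_{W_{xyz}}^{W_{xz}}\left(\bigl(\Res^{W_{xy}}_{W_{xyz}}f_{xy}\bigr)\otimes\bigl(\Res^{W_{yz}}_{W_{xyz}}g_{yz}\bigr)\right),$$
and this is legitimate: the terms $g_{yz}$ on the right all have $x<y$, hence come from strictly smaller intervals, and the index set $(x,z]$ is stable under $W_{xz}$, so by the reasoning of Remark~\ref{fractions} the right-hand side genuinely lies in $\VRep(W_{xz})\otimes R$ rather than in a rational enlargement. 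A second induction on $|[x,z]|$ then shows that the collection $\{g_{xz}\}$ obeys the equivariance constraint $g_{xz}=\Lambda_{\psi^\sigma_{xz}}(g_{\sigma(x)\sigma(z)})$ that is part of the definition of $\IWP\otimes R$, using the equivariance of $f$, the equalities $|W_{\sigma(x)\sigma(y)\sigma(z)}|=|W_{xyz}|$ and $|W_{\sigma(x)\sigma(z)}|=|W_{xz}|$, and the compatibility of $\Ind$ and $\Res$ with conjugation. By construction $fg=\delta$.

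Finally, to upgrade ``$f$ admits a right inverse'' to the statement as posed, I would bootstrap: $g_{xx}=f_{xx}^{-1}$ is invertible, so the construction just given, applied to $g$, produces a right inverse $g'$ of $g$; then $g'=(fg)g'=f(gg')=f$, so $gf=\delta$ and $g$ is a genuine two-sided inverse. Uniqueness and the coincidence of one-sided inverses are then purely formal: any left inverse $h$ of $f$ satisfies $h=h(fg)=(hf)g=g$, and any right inverse $h$ satisfies $h=(gf)h=g(fh)=g$. The one step that is not entirely mechanical is the verification of $W$-equivariance of the recursively defined $g$, and I expect that to be the main obstacle; it should, however, come down to nothing more than unwinding the naturality of induction and restriction under the conjugation isomorphisms $\psi^\sigma_{xy}$.
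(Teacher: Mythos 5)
Your proposal is correct and follows essentially the same route as the paper: solve the convolution equation $(fg)_{xz}=\delta_{xz}$ by recursion on the interval, isolating the $y=x$ term and inverting $\Res^{W_x}_{W_{xz}}f_{xx}$, then show left and right inverses coincide by the same formal bootstrapping argument (the paper takes a right inverse $h$ of $g$ and computes $f = f(gh) = (fg)h = h$, which is your computation in slightly different notation). The one point you flag as nontrivial --- checking that the recursively constructed $g$ actually satisfies the $W$-equivariance constraint and hence lies in $\IWP\otimes R$ --- is a genuine subtlety that the paper passes over in silence, so your remark there is a small improvement rather than a divergence.
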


\begin{proof}
By Proposition \ref{multiplication}, an element $g$ is a right inverse to $f$ if and only if $g_{xx} = f_{xx}^{-1}$ for all $x\in P$ and 
$$\sum_{x\leq y\leq z}\frac{|W_{xyz}|}{|W_{xz}|}\Ind_{W_{xyz}}^{W_{xz}} \left(\left(\Res^{W_{xy}}_{W_{xyz}} f_{xy}\right)\otimes\left( \Res^{W_{yz}}_{W_{xyz}} g_{yz}\right)\right) = 0$$ for all $x<z\in P$.\footnote{If the ring $R$ has integer torsion, then we rewrite this equation without
the fractions as described in Remark \ref{fractions}.}
The second condition can be rewritten as
$$\left(\Res^{W_x}_{W_{xz}} f_{xx}\right)\otimes g_{xz} = - \sum_{x<y\leq z}\frac{|W_{xyz}|}{|W_{xz}|}\Ind_{W_{xyz}}^{W_{xz}} \left(\left(\Res^{W_{xy}}_{W_{xyz}} f_{xy}\right)\otimes\left( \Res^{W_{yz}}_{W_{xyz}} g_{yz}\right)\right).$$
If $f_{xx}$ is invertible in $\VRep(W_{x})\otimes R$, then $\Res^{W_x}_{W_{xz}} f_{xx}$ is invertible in $\VRep(W_{xz})\otimes R$, and this equation has a unique solution
for $g$.  Thus $f$ has a right inverse if and only if $f_{xx}\in\VRep(W_{x})\otimes R$ is invertible for all $x\in P$.  The argument for left inverses is identical, 
so it remains only to show that left and right inverses coincide.

Let $g$ be right inverse to $f$.  Then $g$ is also left inverse to some function, which we will denote $h$.  We then have
$$f = f\delta = f(gh) = (fg)h = \delta h = h,$$
so $g$ is left inverse to $f$, as well.
\end{proof}

\section{Equivariant Kazhdan--Lusztig--Stanley theory}
In this section we take $R$ to be the ring $\Z[t]$ and for each $f\in\IWP\otimes \Z[t]$ and $x\leq y\in P$, we write $f_{xy}(t)$ for the corresponding component of $f$.
One can regard $f_{xy}(t)$ as a polynomial whose coefficients are virtual representations of $W_{xy}$, or equivalently as a graded virtual representation
of $W_{xy}$.  We assume that $P$ is equipped with a $W$-invariant {\bf weak rank function} in the sense of \cite[Section 2]{Brenti-twisted}.
This is a collection of natural numbers $\{r_{xy}\in\N\mid x\leq y\in P\}$ with the following properties:
\begin{itemize}
\item $r_{xy} > 0$ if $x<y$
\item $r_{xy}+r_{yz}=r_{xz}$ if $x\leq y\leq z$
\item $r_{xy} = r_{\sigma(x)\sigma(y)}$ if $x\leq y$ and $\sigma\in W$.
\end{itemize}
Following the notation of \cite[Section 2.1]{KLS}, we define
$$\scrIWP := \left\{f\in\IWP\otimes \Z[t]\bigmid \text{$\deg f_{xy}(t)\leq r_{xy}$ for all $x\leq y$}\right\}$$
along with $$\Ih := \left\{f\in\IWP\otimes \Z[t]\bigmid \text{$\deg f_{xy}(t)< r_{xy}/2$ for all $x< y$ and $f_{xx}(t) = \delta_{xx}(t)$ for all $x$}\right\}.$$
Note that $\scrIWP$ is a subalgebra of $\IWP$,
and we define an involution $f\mapsto \bar f$ of $\scrIWP$ by putting $\bar f_{xy}(t) := t^{r_{xy}} f_{xy}(t^{-1})$.
An element $\kappa\in\scrIWP$ is called a {\bf \boldmath{$P$}-kernel} if $\kappa_{xx}(t) = \delta_{xx}(t)$ for all $x\in P$ and $\bar\kappa = \kappa^{-1}$.

\begin{theorem}\label{thm:KL}
If $\kappa\in\scrIWP$ is a P-kernel, there exists a unique pair of functions $f,g\in\Ih$ such that
$\bar f = \kappa f$ and $\bar g = g\kappa$.
\end{theorem}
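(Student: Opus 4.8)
The plan is to mimic the classical Kazhdan--Lusztig--Stanley argument from \cite[Section 2]{KLS}, carrying out the induction on rank inside the equivariant incidence algebra. I will treat the equation $\bar f = \kappa f$; the case $\bar g = g\kappa$ is symmetric. First I fix the normalization: since $f\in\Ih$ we must have $f_{xx}(t) = \delta_{xx}(t)$ for all $x$, and the equation $\bar f = \kappa f$ holds trivially on the diagonal because $\kappa_{xx}(t)=\delta_{xx}(t)$ as well. So the content is in the off-diagonal components. For $x<y$, I would isolate the $y=z$ and $x=y$ terms in the product formula of Proposition \ref{multiplication}, rewriting $(\kappa f)_{xy}$ as $f_{xy}(t) + \kappa_{xy}(t) + \sum_{x<z<y} \tfrac{|W_{xzy}|}{|W_{xy}|}\Ind_{W_{xzy}}^{W_{xy}}\big((\Res f_{xz})\otimes(\Res \kappa_{zy})\big)$, wait --- more carefully, writing it so that $f_{xy}(t)$ appears alone on one side and everything else involves only intervals strictly shorter than $[x,y]$. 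The defining equation $\bar f_{xy}(t) = (\kappa f)_{xy}(t)$ then reads
$$
t^{r_{xy}} f_{xy}(t^{-1}) - f_{xy}(t) = \sum_{x< z\leq y}\frac{|W_{xzy}|}{|W_{xy}|}\Ind_{W_{xzy}}^{W_{xy}}\!\left(\left(\Res^{W_{xz}}_{W_{xzy}} f_{xz}(t)\right)\otimes\left(\Res^{W_{zy}}_{W_{xzy}} \kappa_{zy}(t)\right)\right),
$$
where the right-hand side involves $f_{xz}$ only for $z>x$, hence only for intervals shorter than $[x,y]$, together with the known kernel $\kappa$.

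By induction on $r_{xy}$, the right-hand side is a known element $A_{xy}(t)\in\VRep(W_{xy})[t]$. The key algebraic point is that $A_{xy}(t)$ satisfies $\overline{A_{xy}} = -A_{xy}$, i.e.\ $t^{r_{xy}}A_{xy}(t^{-1}) = -A_{xy}(t)$; this is exactly where the $P$-kernel condition $\bar\kappa=\kappa^{-1}$, equivalently $\bar\kappa\kappa=\delta$, gets used, via a short manipulation combining the bar-involution with the inductive hypothesis $\bar f_{xz}=(\kappa f)_{xz}$ for shorter intervals. Given this antisymmetry, I claim there is a unique polynomial $f_{xy}(t)\in\VRep(W_{xy})[t]$ with $\deg f_{xy}(t) < r_{xy}/2$ such that $t^{r_{xy}} f_{xy}(t^{-1}) - f_{xy}(t) = A_{xy}(t)$: writing $A_{xy}(t) = \sum_j a_j t^j$ with $a_j = -a_{r_{xy}-j}$, the coefficient of $t^j$ in the left side is $(\text{coeff of }t^{r_{xy}-j}\text{ in }f_{xy}) - (\text{coeff of }t^j\text{ in }f_{xy})$; for $j < r_{xy}/2$ the first term vanishes, forcing the $t^j$-coefficient of $f_{xy}$ to equal $-a_j$, and for $j > r_{xy}/2$ the equation is then automatically satisfied by the antisymmetry of $A_{xy}$, while for $j=r_{xy}/2$ (when $r_{xy}$ is even) we get $a_{r_{xy}/2}=0$ for free from $a_j=-a_{r_{xy}-j}$. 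This determines $f_{xy}$ uniquely and shows it lies in $\Ih$.

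Finally I would check that these locally defined components $f_{xy}$ assemble into a genuine element of $\IWP\otimes\Z[t]$, i.e.\ that they satisfy the equivariance constraint $f_{xy} = \Lambda_{\psi_{xy}^\sigma}(f_{\sigma(x)\sigma(y)})$ from the down-to-earth description of $\IWP$. This follows because the whole recursion is built from $W$-equivariant data: $\kappa$ is already equivariant, the rank function is $W$-invariant, and conjugation by $\sigma$ intertwines the $\Ind/\Res$ operations, so $\Lambda_{\psi_{xy}^\sigma}$ applied to the defining equation for $f_{\sigma(x)\sigma(y)}$ is the defining equation for $f_{xy}$; uniqueness of the solution then forces the desired identity, by induction on rank. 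I expect the main obstacle to be the antisymmetry claim $\overline{A_{xy}} = -A_{xy}$: one has to manipulate $\bar{}$ past the convolution product (using that $\bar{}$ is an algebra involution of $\scrIWP$, which itself needs the compatibility of bar with the rank additivity $r_{xy}+r_{yz}=r_{xz}$), and then feed in the inductive hypothesis on shorter intervals in exactly the right way; this is the step where the classical proof is most delicate, and the representation-valued coefficients do not change its structure but do require care that every $\Ind$/$\Res$ bookkeeping is compatible with the bar-involution.
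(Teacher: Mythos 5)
Your overall architecture matches the paper's proof of \cite[Theorem 2.2]{KLS}: peel off the boundary terms of the convolution to get a recursion for $f_{xy}$ on shorter intervals, prove an antisymmetry identity for the inhomogeneous term using $\bar\kappa\kappa=\delta$, and then read off the coefficients of degree $< r_{xy}/2$. However, there are two real problems.

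First, your expansion of $(\kappa f)_{xy}$ is incorrect: you write the summand as $\Res f_{xz}\otimes\Res\kappa_{zy}$, but by Proposition \ref{multiplication} the component $(\kappa f)_{xy}$ has $\kappa$ in the \emph{first} slot, i.e.\ $\Res \kappa_{xz}\otimes\Res f_{zy}$. With your ordering and your range $\sum_{x<z\leq y}$, the $z=y$ term contributes $f_{xy}\otimes\delta_{yy}=f_{xy}$, so the unknown $f_{xy}$ reappears on the right-hand side and the right-hand side is \emph{not} determined by intervals shorter than $[x,y]$, contrary to what you assert. (Your sum $\sum_{x<z\le y}\Ind(\Res f_{xz}\otimes\Res\kappa_{zy})$ equals $(f\kappa)_{xy}-\kappa_{xy}$, not $(\kappa f)_{xy}-f_{xy}$, and equating it with $\bar f_{xy}-f_{xy}$ does not encode $\bar f=\kappa f$.) The correct recursion, matching the paper, is $\bar f_{xw}(t)-f_{xw}(t)=\sum_{x<y\le w}\tfrac{|W_{xyw}|}{|W_{xw}|}\Ind^{W_{xw}}_{W_{xyw}}\big(\Res\kappa_{xy}\otimes\Res f_{yw}\big)$; here $f_{yw}$ has $y>x$, so $[y,w]\subsetneq[x,w]$ and the induction closes.

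Second, you correctly identify the antisymmetry claim $t^{r_{xy}}A_{xy}(t^{-1})=-A_{xy}(t)$ as the crux, but you do not prove it --- you only say that it ``gets used via a short manipulation.'' This is the heart of the argument, and the paper supplies the actual computation: one applies the bar involution termwise (using $r_{xy}+r_{yw}=r_{xw}$), substitutes the inductive identity $\bar f_{yw}=(\kappa f)_{yw}$, recognizes the result as the formal expression for $(\bar\kappa(\kappa f))_{xw}-(\kappa f)_{xw}$, and then uses associativity together with $\bar\kappa\kappa=\delta$ to simplify to $f_{xw}-(\kappa f)_{xw}=-Q_{xw}(t)$. Without this chain (and the small trick that the $f_{xw}$-dependence cancels in the difference so one may compute formally), the proof is incomplete. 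Your final coefficient-by-coefficient argument for existence and uniqueness once antisymmetry is known is correct and matches the paper, and your remark about $W$-equivariance of the construction is reasonable though the paper leaves it implicit.
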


\begin{proof}
We follow the proof in \cite[Theorem 2.2]{KLS}.
We will prove existence and uniqueness of $f$; the proof for $g$ is identical.
Fix elements $x<w\in P$.  Suppose that $f_{yw}(t)$ has been defined for all $x<y\leq w$ and that the equation $\bar f = \kappa f$
holds where defined.
Let $$Q_{xw}(t) := \sum_{x<y\leq w} \frac{|W_{xyw}|}{|W_{xw}|}
\Ind_{W_{xyw}}^{W_{xw}}\left(\left(\Res^{W_{xy}}_{W_{xyw}}\kappa_{xy}(t)\right)\otimes\left( \Res^{W_{yw}}_{W_{xyw}}f_{yw}(t)\right)\right) 
\in \VRep(W_{xw})\otimes \Z[t].$$
The equation $\bar f = \kappa f$ for the interval $[x,w]$ translates to $$\bar f_{xw}(t) - f_{xw}(t) = Q_{xw}(t).$$
It is clear that there is at most one polynomial $f_{xw}(t)$ of degree strictly less than $r_{xw}/2$ satisfying this equation.
The existence of such a polynomial is equivalent to the statement $$t^{r_{xw}}Q_{xw}(t^{-1}) = -Q_{xw}(t).$$
To prove this, we observe that
\begin{eqnarray*}
t^{r_{xw}}Q_{xw}(t^{-1}) &=& t^{r_{xw}}\sum_{x<y\leq w} \frac{|W_{xyw}|}{|W_{xw}|}\Ind_{W_{xyw}}^{W_{xw}}\left(\left(\Res^{W_{xy}}_{W_{xyw}}\kappa_{xy}(t^{-1})\right)\otimes\left( \Res^{W_{yw}}_{W_{xyw}}f_{yw}(t^{-1})\right)\right)\\
&=& \sum_{x<y\leq w} \frac{|W_{xyw}|}{|W_{xw}|}\Ind_{W_{xyw}}^{W_{xw}}\left(\left(\Res^{W_{xy}}_{W_{xyw}}t^{r_{xy}}\kappa_{xy}(t^{-1})\right)\otimes\left( \Res^{W_{yw}}_{W_{xyw}}t^{r_{yw}}f_{yw}(t^{-1})\right)\right)\\
&=& \sum_{x<y\leq w} \frac{|W_{xyw}|}{|W_{xw}|}\Ind_{W_{xyw}}^{W_{xw}}\left(\left(\Res^{W_{xy}}_{W_{xyw}}\bar\kappa_{xy}(t)\right)\otimes\left( \Res^{W_{yw}}_{W_{xyw}}\bar f_{yw}(t)\right)\right)\\
&=& \sum_{x<y\leq w} \frac{|W_{xyw}|}{|W_{xw}|}\Ind_{W_{xyw}}^{W_{xw}}\left(\left(\Res^{W_{xy}}_{W_{xyw}}\bar\kappa_{xy}(t)\right)\otimes\left( \Res^{W_{yw}}_{W_{xyw}}(\kappa f)_{yw}(t)\right)\right).
\end{eqnarray*}
This is formally equal to the expression for $(\bar\kappa(\kappa f))_{xw} - (\kappa f)_{xw}$, which by associativity
is equal to the expression for $$((\bar\kappa \kappa)f)_{xw} - (\kappa f)_{xw} = f_{xw} - (\kappa f)_{xw}.$$
Thus we have
\begin{eqnarray*}
t^{r_{xw}}Q_{xw}(t^{-1}) &=&
- \sum_{x<y\leq w} \frac{|W_{xyw}|}{|W_{xw}|}\Ind_{W_{xyw}}^{W_{xw}}\left(\left(\Res^{W_{xy}}_{W_{xyw}}\kappa_{xy}(t)\right)\otimes\left( \Res^{W_{yw}}_{W_{xyw}}f_{yw}(t)\right)\right)\\
&=& - Q_{xw}(t).
\end{eqnarray*}
Thus there is a unique choice of polynomial $f_{xw}(t)$ consistent with the equation $\bar f = \kappa f$ on the interval $[x,w]$.
\end{proof}

We will refer to the element $f\in \Ih$ from Theorem \ref{thm:KL} is the {\bf right equivariant KLS-function} associated with $\kappa$,
and to $g$ as the {\bf left equivariant KLS-function} associated with $\kappa$.  
For any $x\leq y$, we will refer to the graded virtual representations $f_{xy}(t)$ and $g_{xy}(t)$
as (right or left) {\bf equivariant KLS-polynomials}.
When $W$ is the trivial group, these definitions specialize to the ones in \cite[Section 2]{KLS}.

\begin{example}\label{char}
Let $\zeta\in\scrIWP$ be the element defined by letting $\zeta_{xy}(t)$ be the trivial representation of $W_{xy}$
in degree zero for all $x\leq y$, and let $\chi := \zeta^{-1}\bar\zeta$.  The function $\chi$ is called the {\bf equivariant characteristic
function} of $P$ with respect to the action of $W$.  We have $\chi^{-1} = \bar\zeta^{-1}\zeta = \bar\chi$, so $\chi$ is a $P$-kernel.
Since $\bar\zeta = \zeta\chi$, $\zeta$ is equal to the left KLS-function associated with $\chi$.  However, the right KLS-function $f$
associated with $\chi$ is much more interesting!  See Propositions \ref{char-OS} and \ref{matroid main} for a special case of this construction.
\end{example}

We next introduce the equivariant analogue of the material in \cite[Section 2.3]{KLS}.
If $\kappa$ is a $P$-kernel with right and left KLS-functions $f$ and $g$,
we define $Z := g\kappa f\in\scrIWP$, which we call the {\bf equivariant \boldmath{$Z$}-function} associated with $\kappa$.
For any $x\leq y$, we will refer to the graded virtual representation $Z_{xy}(t)$ as an {\bf equivariant \boldmath{$Z$}-polynomial}.

\begin{proposition}\label{palindromic}
We have $\bar Z = Z$.  
\end{proposition}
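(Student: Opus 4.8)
The plan is to show that the bar involution $f\mapsto\bar f$ is a \emph{ring homomorphism} of $\scrIWP$, after which the proposition follows from a one-line manipulation of the defining equations of $f$, $g$, and $\kappa$.

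First I would verify that $\overline{fg}=\bar f\,\bar g$ for all $f,g\in\scrIWP$. This is essentially the computation already carried out in the proof of Theorem~\ref{thm:KL}. Concretely, for $x\le z\in P$ one has $\overline{fg}_{xz}(t)=t^{r_{xz}}(fg)_{xz}(t^{-1})$, and using Proposition~\ref{multiplication} together with the additivity $r_{xz}=r_{xy}+r_{yz}$ of the weak rank function, the scalar $t^{r_{xz}}$ distributes across the tensor factors in each summand: since $t$ is central, $t^{r_{xy}}$ may be absorbed into $\Res^{W_{xy}}_{W_{xyz}}f_{xy}(t^{-1})$ and $t^{r_{yz}}$ into $\Res^{W_{yz}}_{W_{xyz}}g_{yz}(t^{-1})$, turning them into $\Res^{W_{xy}}_{W_{xyz}}\bar f_{xy}(t)$ and $\Res^{W_{yz}}_{W_{xyz}}\bar g_{yz}(t)$ respectively; summing over $x\le y\le z$ gives $(\bar f\,\bar g)_{xz}(t)$. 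The involution is already noted in the text to be well-defined on $\scrIWP$ (it preserves $\deg f_{xy}(t)\le r_{xy}$), and it is evidently additive and of order two, hence a ring automorphism.

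With multiplicativity in hand the conclusion is immediate. Recall $Z=g\kappa f$, where $\bar f=\kappa f$, $\bar g=g\kappa$, and $\bar\kappa=\kappa^{-1}$. Applying the bar homomorphism and associativity in $\scrIWP$ (inherited from $\IWP\otimes\Z[t]$),
\[
\bar Z=\overline{g\kappa f}=\bar g\,\bar\kappa\,\bar f=(g\kappa)\,\kappa^{-1}\,(\kappa f)=g\,(\kappa\kappa^{-1})\,(\kappa f)=g\kappa f=Z,
\]
as desired.

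There is no genuinely hard step here: the only point requiring care is the bookkeeping in the proof that $\overline{fg}=\bar f\,\bar g$, namely checking that the factor $t^{r_{xz}}$ splits correctly through the induction–restriction–tensor expression for the convolution product. But this is precisely the algebraic identity already exploited in the middle displayed lines of the proof of Theorem~\ref{thm:KL} (where $t^{r_{xy}}\kappa_{xy}(t^{-1})$ is pulled through the induced representation as $\bar\kappa_{xy}(t)$), so it can be invoked with minimal additional work.
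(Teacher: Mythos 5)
Your proof is correct and is essentially the same as the paper's: both hinge on the multiplicativity of the bar involution. The paper's version is slightly terser (it first rewrites $Z=\bar g f$ using $\bar g=g\kappa$ and then applies the bar to this two-factor product, getting $\bar Z=\overline{\bar g f}=\bar{\bar g}\,\bar f=g\bar f=Z$), while you apply the bar directly to $g\kappa f$ and use all three defining relations at once; you also make explicit the ring-homomorphism property $\overline{fg}=\bar f\,\bar g$, which the paper uses without stating it as a separate fact. This is a minor stylistic difference, not a different route.
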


\begin{proof}
Since $\bar g = g\kappa$, we have $Z = g\kappa f = \bar g f$.  
Since $\bar f = \kappa f$, we have $Z = g\kappa f = g\bar f$.
Thus $\bar Z = \overline{\bar g f} = \bar{\bar g}\bar f = g\bar f = Z$.
\end{proof}

\begin{remark}\label{restrictions}
Suppose that $\kappa\in\IWP$ is a $P$-kernel and $f,g,Z\in\IWP$ are the associated equivariant KLS-functions and equivariant $Z$-function.
It is immediate from the definitions that, if $\psi:W'\to W$ is a group homomorphism, then $R_\psi(f),R_\psi(g),R_\psi(Z)\in I^{W'}\!(P)$
are the equivariant KLS-functions and equivariant $Z$-function associated with the $P$-kernel $R_\psi(\kappa)\in I^{W'}\!(P)$.
In particular, if we take $W'$ to be the trivial group, then Remark \ref{dimension} tells us that the ordinary KLS-polynomials and $Z$-polynomials
are recovered from the equivariant KLS-polynomials and $Z$-polynomials by sending virtual representations to their dimensions.
\end{remark}

\section{Matroids}\label{sec:matroid}
Let $M$ be a matroid, let $L$ be the lattice of flats of $M$
equipped with the usual weak rank function, and let $W$ be a finite group acting on $L$.
Let $\OS^W_M(t)$ be the Orlik--Solomon algebra of $M$, regarded as a graded representation of $W$.
Following \cite[Section 2]{GPY}, we define
$$H^W_M(t) := t^{\rk M}\OS_M^W(-t^{-1})\in\VRep(W)\otimes\Z[t].$$
If $W$ is trivial, then $H^W_M(t)\in\Z[t]$ is equal to the characteristic polynomial of $M$.
For any $F\leq G\in L$, let $M_{FG}$ be the minor of $M$ with lattice of flats $[F,G]$
obtained by deleting the complement of $G$ and contracting $F$; this matroid inherits an action of the stabilizer group $W_{FG}\subset W$.
Define $H\in\scrIWL$ by putting $H_{FG}(t) = H_{M_{FG}}^{W_{FG}}(t)$ for all $F\leq G$.

\begin{proposition}\label{char-OS}
The function $H$ is equal to the equivariant characteristic function of $L$.
\end{proposition}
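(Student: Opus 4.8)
The plan is to verify that $H$ satisfies the two defining properties of the equivariant characteristic function $\chi = \zeta^{-1}\bar\zeta$, namely that $\zeta H = \bar\zeta$, where $\zeta$ is the function whose components $\zeta_{FG}(t)$ are all the trivial representation concentrated in degree zero. Unwinding the multiplication formula of Proposition \ref{multiplication} with $R = \Z[t]$, the component $(\zeta H)_{FG}(t)$ is a sum over flats $F \leq E \leq G$ of induced representations built from $\zeta_{FE}(t) \otimes H_{EG}(t)$; since $\zeta_{FE}(t)$ is the trivial one-dimensional representation, this collapses, and the claim $\zeta H = \bar\zeta$ becomes, for each interval $[F,G]$ with $F < G$, an identity of the shape $\sum_{F \leq E \leq G} \frac{|W_{FEG}|}{|W_{FG}|}\Ind_{W_{FEG}}^{W_{FG}}\Res\, H_{M_{EG}}^{W_{EG}}(t) = 0$, together with the base case $H_{FF}(t) = $ trivial representation in degree zero, which holds since $M_{FF}$ is the empty matroid of rank $0$ and its Orlik--Solomon algebra is $k$ in degree $0$.

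The heart of the matter is therefore an equivariant, incidence-algebra version of the classical fact that the Orlik--Solomon algebra (equivalently, the characteristic polynomial) is the inverse of the zeta function in the incidence algebra of the lattice of flats. Concretely, I would use the Orlik--Solomon / Whitney-type identity: for a matroid $N$ of positive rank with lattice of flats $L_N$, one has $\sum_{E \in L_N} \OS_{N_{0E}}(t) \cdot (\text{something in degree } \rk E) = 0$ when suitably homogenized, reflecting the long exact sequence relating deletion and contraction, or more directly the fact that $\sum_E \mu(0,E)$-type alternating sums of OS-algebras vanish. The right statement to invoke is that in the (non-equivariant) incidence algebra the function $F \mapsto \OS_{M_{FG}}(t)$ — or rather its homogenization $H_{M_{FG}}(t)$ — is the inverse of $\zeta$; this is essentially Stanley's observation, and in the $W$-equivariant refinement it is proved in \cite[Section 2]{GPY}. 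So the cleanest route is: cite the GPY computation that $\sum_{F \leq E \leq G}$ (appropriately induced) $H^{W_{EG}}_{M_{EG}}(t) = \delta_{FG}(t)$ as graded virtual $W_{FG}$-representations, and observe that this is exactly the equation $\zeta H = \delta$ after applying $\overline{\phantom{x}}$ to pass between $H$ and $\bar\zeta$. Indeed $\bar\zeta_{FG}(t) = t^{r_{FG}}\zeta_{FG}(t^{-1})$ is the trivial representation in top degree $r_{FG}$, and one checks $\bar\zeta = \zeta H$ is equivalent to $H^{-1} = \bar H \cdot(\text{unit adjustments})$ — tracking the bar-involution bookkeeping carefully is the one genuinely fiddly point.

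The main obstacle I anticipate is not conceptual but organizational: matching the equivariant Whitney identity as it appears in \cite{GPY} (which is phrased in terms of the equivariant Kazhdan--Lusztig setup for matroids, with its own indexing of minors and stabilizers) against the incidence-algebra product of Proposition \ref{multiplication}, including getting the $\frac{|W_{FEG}|}{|W_{FG}|}$ normalization factors and the $\Ind/\Res$ placements to line up. I would handle this by first reducing to a single interval $[F,G]$, replacing $W$ by $W_{FG}$ acting on the minor $M_{FG}$ (so without loss of generality $F = \hat 0$, $G = \hat 1$, and $W$ fixes both), at which point the identity to be proved is precisely the statement that the equivariant characteristic polynomial of $M$, defined via the Orlik--Solomon algebra, is the convolution inverse of the equivariant zeta function — and this is exactly what the deletion-contraction / OS exact sequence gives, as recorded in \cite[Section 2]{GPY}. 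I would also note the degree bookkeeping: $\deg H_{FG}(t) = \rk M_{FG} = r_{FG}$ with the top coefficient $(-1)^{r_{FG}}$ times the trivial representation (since $\OS$ has top degree $\rk M$), so $H \in \scrIWL$ and the product $\zeta H$ lands where it should, confirming that $H$ is the characteristic function rather than merely a formal inverse.
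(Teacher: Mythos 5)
Your overall strategy is the same as the paper's: the key fact to cite from \cite{GPY} is the equivariant Whitney-type identity $\zeta H = \bar\zeta$ (this is \cite[Lemma 2.5]{GPY}), from which $H = \zeta^{-1}\bar\zeta = \chi$ follows by multiplying on the left by $\zeta^{-1}$. You correctly identify $\zeta H = \bar\zeta$ as the target at the start of your argument.

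However, the middle of your proposal mis-states the identity in ways that would derail the proof if followed literally. First, you write that for $F<G$ the equation becomes
\[
\sum_{F\leq E\leq G}\frac{|W_{FEG}|}{|W_{FG}|}\Ind_{W_{FEG}}^{W_{FG}}\Res\, H_{M_{EG}}^{W_{EG}}(t) = 0,
\]
but the right-hand side is not $0$: it is $\bar\zeta_{FG}(t) = t^{r_{FG}}$, i.e.\ the trivial representation placed in degree $r_{FG}$. (This is exactly what Whitney's theorem gives in the non-equivariant case: $\sum_{E}\chi_{M_{EG}}(t) = t^{\rk M_{FG}}$, not $0$.) Second, you later say the GPY input amounts to $\zeta H = \delta$, equivalently that $H$ is ``the convolution inverse of the equivariant zeta function.'' That is not correct: $\zeta^{-1}$ is the (equivariant) M\"obius function, while $H$ is the characteristic function $\zeta^{-1}\bar\zeta$. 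The extra factor of $\bar\zeta$ is precisely what separates the two, and it is the reason the identity to be cited is $\zeta H = \bar\zeta$ rather than $\zeta H = \delta$. Finally, the claim that ``$\bar\zeta = \zeta H$ is equivalent to $H^{-1} = \bar H$'' is only an implication in one direction: $H = \zeta^{-1}\bar\zeta$ forces $\bar H = \bar\zeta^{-1}\zeta = H^{-1}$, but there are many $P$-kernels, so the converse fails. Once you fix the target identity to $\zeta H = \bar\zeta$ and cite \cite[Lemma 2.5]{GPY} for it, the rest of your outline (reduce to one interval, match normalizations) is fine and matches the paper; the paper itself simply cites the GPY lemma and multiplies by $\zeta^{-1}$.
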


\begin{proof}
It is proved in \cite[Lemma 2.5]{GPY} that $\zeta H = \bar \zeta$.  Multiplying on the left by $\zeta^{-1}$, we have $H = \zeta^{-1}\bar\zeta$,
which is the definition of the equivariant characteristic function of $L$.
\end{proof}

\begin{remark}
The proof of \cite[Lemma 2.5]{GPY} is surprisingly difficult.\footnote{The difficult
part appears in the proof of Lemma 2.4, which is then used to prove Lemma 2.5.}
Consequently, Proposition \ref{char-OS} is a deep fact about Orlik--Solomon algebras, not just a formal consequence of the definitions.
\end{remark}

The {\bf equivariant Kazhdan--Lusztig polynomial} $P^W_M(t) \in \VRep(W)\otimes\Z[t]$ was introduced
in \cite[Section 2.2]{GPY}.  
Define $P\in\IhL$ by putting $P_{FG}(t) = P_{M_{FG}}^{W_{FG}}(t)$ for all $F\leq G$.
The defining recursion for $P^W_M(t)$ in \cite[Theorem 2.8]{GPY} translates to the formula $\bar P = H P$, which immediately implies the following proposition.

\begin{proposition}\label{matroid main}
The function $P$ is the right equivariant KLS-function associated with $H$.
\end{proposition}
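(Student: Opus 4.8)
The plan is to unwind the definitions and reduce the claim to the defining recursion for the equivariant Kazhdan--Lusztig polynomial established in \cite[Theorem 2.8]{GPY}. Recall from Proposition \ref{char-OS} that $H$ is the equivariant characteristic function of $L$, hence (by Example \ref{char}) $H$ is an $L$-kernel. Therefore Theorem \ref{thm:KL} applies: there is a unique $f\in\IhL$ with $\bar f = Hf$. So it suffices to check two things about the element $P\in\IhL$ defined by $P_{FG}(t) = P^{W_{FG}}_{M_{FG}}(t)$: first, that $P$ genuinely lies in $\IhL$, and second, that $\bar P = HP$. Uniqueness in Theorem \ref{thm:KL} then forces $P = f$, which is exactly the assertion.

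For membership in $\IhL$, I would note that the coefficients of $P^{W_{FG}}_{M_{FG}}(t)$ are virtual representations of $W_{FG}$, that $P^{W_{FG}}_{M_{FG}}(t)$ has degree strictly less than $\tfrac12\rk M_{FG} = \tfrac12 r_{FG}$ when $F<G$, and that $P^{W_F}_{M_{FF}}(t)$ is the trivial representation in degree $0$ (since $M_{FF}$ is the empty matroid, whose Kazhdan--Lusztig polynomial is $1$), all of which are part of the construction in \cite[Section 2.2]{GPY}. The equivariance condition $P_{FG} = \Lambda_{\psi_{FG}^\sigma}(P_{\sigma(F)\sigma(G)})$ follows from the canonical isomorphism of minors $M_{FG}\cong M_{\sigma(F)\sigma(G)}$ intertwining the $W_{FG}$- and $W_{\sigma(F)\sigma(G)}$-actions via conjugation by $\sigma$. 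This is routine but should be stated.

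The substantive point is translating the recursion of \cite[Theorem 2.8]{GPY} into the identity $\bar P = HP$. The recursion there expresses $t^{\rk M}P^W_M(t^{-1}) - P^W_M(t)$ as a sum over flats $F$ of $L\setminus\{\hat 0\}$ of an induced representation built from $H^{W_F}_{M_F}(t)$ (the characteristic polynomial data of the localization) and $P^{W^F}_{M^F}(t)$ (the Kazhdan--Lusztig data of the contraction), with exactly the factor $|W_{\cdot}|/|W_{\cdot}|$ and the $\Ind$--$\Res$ pattern appearing in Proposition \ref{multiplication}. Applying this recursion to each minor $M_{FG}$ and matching the stabilizer subgroups — $W_F$ of \cite{GPY} for the minor $M_{FG}$ becomes $W_{FE}$ in our interval notation for an intermediate flat $E$, etc. — one sees that the left-hand side is $\bar P_{FG}(t) - P_{FG}(t)$ and the right-hand side is precisely $(HP)_{FG}(t) - P_{FG}(t)$ as computed by Proposition \ref{multiplication} together with $H_{FE} = H^{W_{FE}}_{M_{FE}}$. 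Hence $\bar P = HP$. The main obstacle is purely bookkeeping: carefully checking that the normalizing fractions, the restriction/induction functors, and the identification of localizations and contractions of minors with the minors indexed by subintervals all line up exactly with the conventions of Proposition \ref{multiplication}; once that dictionary is in place the proposition is immediate, which is why the statement says the recursion ``immediately implies'' it.
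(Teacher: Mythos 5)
Your proposal matches the paper's argument: the paper simply observes that the recursion of \cite[Theorem 2.8]{GPY}, once unwound for every minor $M_{FG}$, is exactly the component-wise form of $\bar P = HP$ given by Proposition \ref{multiplication}, and then invokes the uniqueness clause of Theorem \ref{thm:KL}. You fill in the two bookkeeping steps the paper leaves tacit (that $P$ lies in $\IhL$ — degree bound, normalization on diagonal intervals, and $W$-equivariance via the canonical isomorphisms $M_{FG}\cong M_{\sigma(F)\sigma(G)}$ — and the dictionary between localizations/contractions of $M_{FG}$ and subintervals of $[F,G]$), but the route is the same.
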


The {\bf equivariant \boldmath{$Z$}-polynomial} $Z^W_M(t) \in \VRep(W)\otimes\Z[t]$ was introduced
in \cite[Section 6]{PXY}.  Define $Z\in\scrIWL$ by putting $Z_{FG}(t) = Z_{M_{FG}}^{W_{FG}}(t)$ for all $F\leq G$.
The defining recursion for $Z^W_M(t)$ in \cite[Section 6]{PXY} translates to the formula $Z = \bar\zeta P$.

\begin{proposition}\label{matroid Z}
The function $Z$ is the $Z$-function associated with $H$.
\end{proposition}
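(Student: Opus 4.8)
The plan is to deduce this directly from the definitions of the equivariant $Z$-function together with Propositions \ref{char-OS} and \ref{matroid main}. Recall that the $Z$-function associated with a $P$-kernel $\kappa$ is defined as $Z = g\kappa f$, where $f$ and $g$ are the right and left equivariant KLS-functions associated with $\kappa$. Here we take $\kappa = H$. By Proposition \ref{char-OS}, $H$ is the equivariant characteristic function of $L$, i.e. $H = \zeta^{-1}\bar\zeta$; and Example \ref{char} observes that for the characteristic function the left KLS-function is exactly $\zeta$ (since $\bar\zeta = \zeta H$). By Proposition \ref{matroid main}, the right KLS-function associated with $H$ is the function $P$ built from the equivariant Kazhdan--Lusztig polynomials. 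Therefore the $Z$-function associated with $H$ is, by definition, $\zeta H P$.

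Next I would simplify $\zeta H P$. Since $H = \zeta^{-1}\bar\zeta$, we get $\zeta H P = \zeta\zeta^{-1}\bar\zeta P = \bar\zeta P$ by associativity in $\scrIWL \subset \IWL\otimes\Z[t]$. On the other hand, the text records that the defining recursion for $Z^W_M(t)$ in \cite[Section 6]{PXY} translates precisely to the formula $Z = \bar\zeta P$ for the function $Z\in\scrIWL$ defined by $Z_{FG}(t) = Z^{W_{FG}}_{M_{FG}}(t)$. Hence this $Z$ coincides with the $Z$-function $\zeta H P = \bar\zeta P$ associated with $H$, which is exactly the assertion of the proposition.

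The only real content lies outside the formal manipulation: one must know that the recursion of \cite[Section 6]{PXY} for the equivariant $Z$-polynomial is equivalent to the incidence-algebra identity $Z = \bar\zeta P$, which is the step where the combinatorial definition is matched against the abstract one (compare the analogous translations done for $H$ in Proposition \ref{char-OS} and for $P$ in Proposition \ref{matroid main}). Granting that translation — which the paragraph preceding the proposition asserts — the proof is a two-line computation, so I do not anticipate any genuine obstacle beyond correctly invoking associativity and the identification of $\zeta$ as the left KLS-function of $H$ from Example \ref{char}.
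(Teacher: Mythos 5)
Your proof is correct and takes essentially the same route as the paper: identify $\zeta$ and $P$ as the two KLS-functions of $H$ via Example \ref{char} and Proposition \ref{matroid main}, plug into the definition $Z = g\kappa f$, simplify $\zeta H P = \bar\zeta P$, and match against the translated recursion from \cite[Section 6]{PXY}. You are in fact slightly more careful than the paper's own prose, which inadvertently swaps the words ``left'' and ``right'' when invoking Example \ref{char} and Proposition \ref{matroid main}, though its formula $\zeta H P = \bar\zeta P = Z$ is the same as yours.
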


\begin{proof}
Example \ref{char} tells us that the right KLS-function associated with $H$ is $\zeta$ and Proposition \ref{matroid main} tells us that the
left KLS-function associated with $H$ is $P$, thus the $Z$-function is equal 
$\zeta H P = \bar\zeta P = Z.$
\end{proof}

The following corollary was asserted without proof in \cite[Section 6]{PXY}, and follows immediately from Propositions \ref{palindromic} and \ref{matroid Z}.

\begin{corollary}
The polynomial $Z^W_M(t)$ is palindromic.  That is, $t^{\rk M}Z^W_M(t^{-1}) = Z^W_M(t)$.
\end{corollary}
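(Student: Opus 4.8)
The plan is to deduce the corollary directly from the two results the excerpt tells us to use, with essentially no new computation. First I would invoke Proposition \ref{matroid Z}, which identifies the function $Z\in\scrIWL$ (defined by $Z_{FG}(t)=Z^{W_{FG}}_{M_{FG}}(t)$) with the equivariant $Z$-function associated with the $P$-kernel $H$. Here $H$ is a genuine $L$-kernel: Proposition \ref{char-OS} shows $H$ is the equivariant characteristic function of $L$, and Example \ref{char} observes that such a function is always a kernel (since $\chi^{-1}=\bar\zeta^{-1}\zeta=\bar\chi$). So the general theory of Section 3 applies to $H$.

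Next I would apply Proposition \ref{palindromic}, which asserts $\bar Z = Z$ for the $Z$-function associated with any $P$-kernel. Reading off the component at an interval $[F,G]$, and using the definition of the involution, $\bar Z_{FG}(t) = t^{r_{FG}} Z_{FG}(t^{-1})$, we get $t^{r_{FG}} Z_{FG}(t^{-1}) = Z_{FG}(t)$ in $\VRep(W_{FG})\otimes\Z[t]$. Then I would specialize to $F=\hat 0$ (the minimal flat, the empty flat, so that $M_{\hat 0 G}$ is the restriction to $G$) and $G = \hat 1$ (the maximal flat, i.e.\ the whole ground set), so that $M_{FG}=M$, $W_{FG}=W$, $r_{FG}=\rk M$, and $Z_{FG}(t)=Z^W_M(t)$. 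This yields exactly $t^{\rk M} Z^W_M(t^{-1}) = Z^W_M(t)$, which is the claimed palindromicity.

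I do not anticipate any real obstacle: the substantive content has already been packaged into Propositions \ref{palindromic}, \ref{char-OS}, and \ref{matroid Z}, together with Example \ref{char}. The only points requiring a word of care are (i) confirming that $H$ is indeed a kernel so that the hypotheses of Proposition \ref{palindromic} are met — but this is immediate from Proposition \ref{char-OS} and Example \ref{char} — and (ii) correctly matching up the interval $[\hat 0,\hat 1]$ in $L$ with the matroid $M$ itself and its rank function, which is a matter of unwinding the definitions in Section \ref{sec:matroid}. Thus the corollary follows by taking the $[\hat 0,\hat 1]$-component of the identity $\bar Z = Z$.

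\begin{proof}
By Proposition \ref{char-OS}, the function $H\in\scrIWL$ is the equivariant characteristic function of $L$, and by Example \ref{char} the equivariant characteristic function is an $L$-kernel. By Proposition \ref{matroid Z}, the function $Z\in\scrIWL$ with $Z_{FG}(t)=Z^{W_{FG}}_{M_{FG}}(t)$ is the equivariant $Z$-function associated with $H$, so Proposition \ref{palindromic} gives $\bar Z = Z$. Taking the component at the interval $[\hat 0,\hat 1]$ of $L$, for which $M_{\hat 0\hat 1}=M$, $W_{\hat 0\hat 1}=W$, $r_{\hat 0\hat 1}=\rk M$, and $Z_{\hat 0\hat 1}(t)=Z^W_M(t)$, the identity $\bar Z_{\hat 0\hat 1}(t) = Z_{\hat 0\hat 1}(t)$ reads $t^{\rk M}Z^W_M(t^{-1}) = Z^W_M(t)$.
\end{proof}
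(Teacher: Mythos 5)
Your proof is correct and follows exactly the route the paper indicates: it deduces palindromicity from Propositions \ref{palindromic} and \ref{matroid Z} by taking the $[\hat 0,\hat 1]$-component of $\bar Z = Z$. The extra check that $H$ is an $L$-kernel (via Proposition \ref{char-OS} and Example \ref{char}) is a sensible bit of due diligence but is already implicit in the paper's setup.
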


When $W$ is the trivial group, Gao and Xie define polynomials $Q_M(t)$ and $\hat Q_M(t) = (-1)^{\rk M}Q_M(t)$ with the property that 
$\left(P^{-1}\right)_{FG}(t) = \hat Q_{M_{FG}}(t)$ \cite{GX}.  If $\hat 0$ and $\hat 1$ are the minimal and maximal flats of $M$,
this is equivalent to the statement that $Q_M(t) = (-1)^{\rk M}\left(P^{-1}\right)_{\hat 0 \hat 1}(t)$.
The polynomial $Q_M(t)$ is called the {\bf inverse Kazhdan--Lusztig polynomial of $M$}.\footnote{The reason for bestowing this name on $Q_M(t)$ rather than $\hat Q_M(t)$ is that $Q_M(t)$ has non-negative coefficients.}
Using the machinery of this paper, we may extend their definition to the equivariant setting by defining 
the {\bf equivariant inverse Kazhdan--Lusztig polynomial}
$$Q_M^W(t) := (-1)^{\rk M}\left(P^{-1}\right)_{\hat 0 \hat 1}(t).$$  If we then define $\hat Q\in\IhL$ by putting $\hat Q_{FG}(t) = (-1)^{r_{FG}}Q_{M_{FG}}^{W_{FG}}(t)$ for all $F\leq G$,
we immediately obtain the following proposition.

\begin{proposition}
The functions $P$ and $\hat Q$ are mutual inverses in $\IWL$.
\end{proposition}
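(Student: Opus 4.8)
The plan is to prove directly that $\hat Q = P^{-1}$ in $\IWL\otimes\Z[t]$; since the left and right inverses of $P$ agree (Proposition \ref{inverses}, which applies because $P_{FF}(t) = \delta_{FF}(t)$ is a unit), this is exactly the statement that $P$ and $\hat Q$ are mutual inverses. So it suffices to check that $(P^{-1})_{FG}(t) = \hat Q_{FG}(t)$ for every $F\le G$ in $L$. Unwinding the definitions, $\hat Q_{FG}(t) = (-1)^{r_{FG}}Q^{W_{FG}}_{M_{FG}}(t)$, and applying the defining formula of $Q^W_M(t)$ to the matroid $M_{FG}$ --- whose lattice of flats is $[F,G]$, whose rank is $r_{FG}$, and which carries a $W_{FG}$-action --- gives $Q^{W_{FG}}_{M_{FG}}(t) = (-1)^{r_{FG}}(\mathcal P^{-1})_{FG}(t)$, where $\mathcal P$ is the equivariant KLS-function of $M_{FG}$, an element of $I^{W_{FG}}([F,G])\otimes\Z[t]$, and the inverse is taken there. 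The two signs cancel, so $\hat Q_{FG}(t) = (\mathcal P^{-1})_{FG}(t)$, and the proposition reduces to the claim that the $(F,G)$-component of $P^{-1}$, computed in $\IWL$, equals the $(F,G)$-component of $\mathcal P^{-1}$, computed in the smaller ring $I^{W_{FG}}([F,G])$. (In the non-equivariant setting this is the familiar locality of incidence-algebra inversion; here one must keep track of the group.)

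The key step is to produce a ``restriction to a subinterval'' ring homomorphism, which is cleanest in the categorical description of $\IWL$. Fix $F\le G$. The subgroup $W_{FG}$ preserves the subposet $[F,G]$, and there is a functor $\rho\colon\cC^W(L)\to\cC^{W_{FG}}([F,G])$ sending $V = \prod_{x\le y}V_{xy}$ to $\prod_{F\le x\le y\le G}V_{xy}$, equipped with the $W_{FG}$-action obtained by restricting the $W$-action (so the $(x,y)$-summand becomes a representation of $(W_{FG})_{xy}\subseteq W_{xy}$). Because the tensor product $(U\otimes V)_{xz} = \bigoplus_{x\le y\le z}U_{xy}\otimes V_{yz}$ only involves summands indexed by $y$ in the interval $[x,z]$, which lies in $[F,G]$ whenever $x$ and $z$ do, the functor $\rho$ is strictly monoidal; passing to Grothendieck rings yields a ring homomorphism $R_\rho\colon\IWL\to I^{W_{FG}}([F,G])$ acting on components by $R_\rho(f)_{xy} = \Res^{W_{xy}}_{(W_{FG})_{xy}}f_{xy}$. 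This extends over $\Z[t]$, commutes with the bar involution, and preserves the degree conditions defining $\scrIWL$ and $\IhL$ (the weak rank function on $[F,G]$ being the restriction of the one on $L$); hence, just as in Remark \ref{restrictions} and by the uniqueness in Theorem \ref{thm:KL}, $R_\rho$ carries $P$-kernels to $P$-kernels and their right KLS-functions to right KLS-functions.

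Finally I would apply this with $\kappa = H$. Since the Orlik--Solomon algebra is a fixed graded vector space on which the group acts, $\Res^W_{W'}\OS^W_M = \OS^{W'}_M$ and hence $\Res^W_{W'}H^W_M(t) = H^{W'}_M(t)$; together with $(M_{FG})_{xy} = M_{xy}$ this gives $R_\rho(H)_{xy}(t) = \Res^{W_{xy}}_{(W_{FG})_{xy}}H^{W_{xy}}_{M_{xy}}(t) = H^{(W_{FG})_{xy}}_{(M_{FG})_{xy}}(t)$, so by Proposition \ref{char-OS} applied to $M_{FG}$ the $P$-kernel $R_\rho(H)$ is the equivariant characteristic function of $M_{FG}$. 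Its right KLS-function is therefore $R_\rho(P)$ (by the previous paragraph, using Proposition \ref{matroid main}) and also $\mathcal P$ (by Proposition \ref{matroid main} for $M_{FG}$), so $R_\rho(P) = \mathcal P$ and hence $R_\rho(P^{-1}) = \mathcal P^{-1}$. Reading off the $(F,G)$-component and using $(W_{FG})_{FG} = W_{FG}$, so that $R_\rho(P^{-1})_{FG} = (P^{-1})_{FG}$, we obtain $(P^{-1})_{FG}(t) = (\mathcal P^{-1})_{FG}(t) = \hat Q_{FG}(t)$, as desired. The step requiring the most care is the construction of $R_\rho$ in the middle paragraph --- getting the stabilizer groups right and verifying compatibility with the bar involution and with Theorem \ref{thm:KL}; everything else is bookkeeping, the one non-formal ingredient (Proposition \ref{char-OS}) being already in hand.
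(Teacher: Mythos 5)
Your proof is correct. The paper asserts this proposition as ``immediate'' and supplies no argument, so there is nothing in the paper to compare against; what you have done is make explicit the locality statement on which that immediacy silently rests, namely that the $(F,G)$-component of $P^{-1}$ computed in $\IWL$ agrees with the $(\hat 0,\hat 1)$-component of the corresponding inverse for the minor $M_{FG}$ computed in $I^{W_{FG}}([F,G])$. The restriction-to-a-subinterval homomorphism $R_\rho\colon\IWL\to I^{W_{FG}}([F,G])$ that you build is exactly the right tool: being induced by a strict monoidal functor it is a ring homomorphism and hence commutes with inversion; it intertwines the bar involutions because the weak rank function on $[F,G]$ is the restriction of that on $L$; it preserves the degree constraints cutting out the half-degree subset; and therefore, by the uniqueness clause of Theorem \ref{thm:KL}, it carries $P$-kernels and their KLS-functions to $P$-kernels and their KLS-functions. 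Combined with $\Res^W_{W'}H^W_M = H^{W'}_M$ and $(M_{FG})_{xy}=M_{xy}$, this identifies $R_\rho(H)$ with the characteristic function of $M_{FG}$ via Proposition \ref{char-OS}, forces $R_\rho(P)=\mathcal{P}$, and yields $(P^{-1})_{FG}=(\mathcal{P}^{-1})_{FG}=\hat Q_{FG}$ as desired. It is worth observing that the same implicit locality already underlies Propositions \ref{matroid main} and \ref{matroid Z} when $P$ and $Z$ are assembled componentwise from minors, so $R_\rho$ deserves to be recorded as a lemma in its own right, alongside $R_\psi$ from Remark \ref{base change}.
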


\bibliography{./symplectic}

\def\cprime{$'$}
\providecommand{\bysame}{\leavevmode\hbox to3em{\hrulefill}\thinspace}
\providecommand{\MR}{\relax\ifhmode\unskip\space\fi MR }
\providecommand{\MRhref}[2]{%
  \href{http://www.ams.org/mathscinet-getitem?mr=#1}{#2}
}
\providecommand{\href}[2]{#2}
\begin{thebibliography}{EPW16}

\bibitem[Bre99]{Brenti-twisted}
Francesco Brenti, \emph{Twisted incidence algebras and
  {K}azhdan-{L}usztig-{S}tanley functions}, Adv. Math. \textbf{148} (1999),
  no.~1, 44--74.

\bibitem[Bum13]{Bump}
Daniel Bump, \emph{Lie groups}, second ed., Graduate Texts in Mathematics, vol.
  225, Springer, New York, 2013.

\bibitem[DRS72]{incidence-generating}
Peter Doubilet, Gian-Carlo Rota, and Richard Stanley, \emph{On the foundations
  of combinatorial theory. {VI}. {T}he idea of generating function},
  Proceedings of the {S}ixth {B}erkeley {S}ymposium on {M}athematical
  {S}tatistics and {P}robability ({U}niv. {C}alifornia, {B}erkeley, {C}alif.,
  1970/1971), {V}ol. {II}: {P}robability theory, 1972, pp.~267--318.

\bibitem[EPW16]{EPW}
Ben Elias, Nicholas Proudfoot, and Max Wakefield, \emph{The {K}azhdan-{L}usztig
  polynomial of a matroid}, Adv. Math. \textbf{299} (2016), 36--70.

\bibitem[GPY17]{GPY}
Katie Gedeon, Nicholas Proudfoot, and Benjamin Young, \emph{The equivariant
  {K}azhdan--{L}usztig polynomial of a matroid}, J. Combin. Theory Ser. A
  \textbf{150} (2017), 267--294.

\bibitem[GX20]{GX}
Alice L.~L. Gao and Matthew~H.Y. Xie, \emph{The inverse {K}azhdan-{L}usztig
  polynomial of a matroid}, 2020, \textsf{arXiv:2007.15349}.

\bibitem[Pro18]{KLS}
Nicholas Proudfoot, \emph{The algebraic geometry of
  {K}azhdan-{L}usztig-{S}tanley polynomials}, EMS Surv. Math. Sci. \textbf{5}
  (2018), no.~1, 99--127.

\bibitem[PXY18]{PXY}
Nicholas Proudfoot, Yuan Xu, and Ben Young, \emph{The {$Z$}-polynomial of a
  matroid}, Electron. J. Combin. \textbf{25} (2018), no.~1, Paper 1.26, 21.

\bibitem[Rot64]{Rota-incidence}
Gian-Carlo Rota, \emph{On the foundations of combinatorial theory. {I}.
  {T}heory of {M}\"{o}bius functions}, Z. Wahrscheinlichkeitstheorie und Verw.
  Gebiete \textbf{2} (1964), 340--368 (1964).

\bibitem[Sta92]{Stanley-h}
Richard~P. Stanley, \emph{Subdivisions and local {$h$}-vectors}, J. Amer. Math.
  Soc. \textbf{5} (1992), no.~4, 805--851.

\end{thebibliography}
\bibliographystyle{amsalpha}

\end{document}